\newtheorem{theorem}{Theorem}[section]
\newtheorem{proposition}[theorem]{Proposition}
\theoremstyle{definition}
\theoremstyle{remark}
\newtheorem{remark}[theorem]{Remark}
\numberwithin{equation}{section}
\begin{document}

\newcommand{\sgn}{\operatorname{sgn}}

\def\a{\alpha}
\def\b{\beta}
\def\d{\delta}
\def\g{\gamma}
\def\l{\lambda}
\def\o{\omega}
\def\s{\sigma}
\def\t{\tau}
\def\th{\theta}
\def\r{\rho}
\def\D{\Delta}
\def\G{\Gamma}
\def\O{\Omega}
\def\e{\varepsilon}
\def\p{\phi}
\def\P{\Phi}
\def\S{\Psi}
\def\E{\eta}
\def\m{\mu}
\def\grad{\nabla}
\def\bar{\overline}
\newcommand{\reals}{\mathbb{R}}
\newcommand{\naturals}{\mathbb{N}}
\newcommand{\ints}{\mathbb{Z}}
\newcommand{\complex}{\mathbb{C}}
\newcommand{\rationals}{\mathbb{Q}}
\newcommand{\innerprod}[1]{\left\langle#1\right\rangle}
\newcommand{\norm}[1]{\left\|#1\right\|}
\newcommand{\abs}[1]{\left|#1\right|}

\author[P. G\'erard]{Patrick G\'erard}
\address{Universit\'e Paris-Sud XI, Laboratoire de Math\'ematiques d'Orsay, CNRS, UMR 8628,
et Institut Universitaire de France}
\email{Patrick.Gerard@math.u-psud.fr}

\author[Y. Guo]{Yanqiu Guo}
\address{Department of Computer Science and Applied Mathematics \\  Weizmann Institute of Science\\
Rehovot 76100, Israel} \email{yanqiu.guo@weizmann.ac.il}

\author[E. S. Titi]{Edriss S. Titi}
\address{Department of Mathematics and Department of Mechanical and Aerospace Engineering\\
University of California, Irvine, California 92697-3875, USA and Department of Computer
Science and Applied Mathematics, Weizmann Institute of Science, Rehovot 76100, Israel} \email{etiti@math.uci.edu and edriss.titi@weizmann.ac.il}

\title[Analyticity of solutions to the Cubic Szeg\H{o} Equation]
{On the radius of analyticity of solutions\\ to the cubic Szeg\H{o} equation}
\date{Revised: August 6, 2013}
\keywords{Cubic Szeg\H{o} equation, Gevrey class regularity, analytic solutions, Hankel operators}
\subjclass[2010]{35B10, 35B65, 47B35}

\maketitle

\begin{abstract}
This paper is concerned with the cubic Szeg\H{o} equation
$$i\partial_t u=\Pi(|u|^2 u),$$
defined on the $L^2$ Hardy space on the one-dimensional torus $\mathbb T$, where $\Pi: L^2(\mathbb T)\rightarrow L^2_+(\mathbb T)$ is the Szeg\H{o} projector onto the non-negative frequencies.
For analytic initial data, it is shown that the solution remains spatial analytic for all time $t\in (-\infty,\infty)$.
In addition, we find a lower bound for the radius of analyticity of the solution. Our method involves energy-like estimates of the special Gevrey class of analytic functions based on the $\ell^1$ norm of Fourier transforms (the Wiener algebra).
\end{abstract}

\section {Introduction }\label{S1}
In studying the nonlinear Schr\"odinger equation
\begin{align*}
i\partial_t u+\Delta u=\pm|u|^2 u, \;\;\; (t,x)\in \reals \times M,
\end{align*}
Burq, G\'erard and Tzvetkov \cite{Burq-Gerard-Tzvetkov-05} observed that dispersion properties are strongly influenced by the geometry of the underlying manifold $M$. In \cite{Gerard-10}, G\'erard and Grellier mentioned, if there exists a smooth local in time flow map on the Sobolev space $H^s(M)$, then the following Strichartz-type estimate must hold:
\begin{align} \label{Strichartz}
\norm{e^{it\Delta}f}_{L^4([0,1]\times M)}\lessapprox \norm{f}_{H^{s/2}(M)}.
\end{align}
It is shown in \cite{Burq-Gerard-Tzvetkov-02, Burq-Gerard-Tzvetkov-05} that, on the two-dimensional sphere, the infimum of the number $s$ such that (\ref{Strichartz}) holds is $\frac{1}{4}$; however, if $M=\reals^2$, the inequality (\ref{Strichartz}) is valid for $s=0$. As pointed out in \cite{Gerard-10}, this can be interpreted as a lack of dispersion properties for the spherical geometry. Taking this idea further, it is remarked in \cite{Gerard-10} that dispersion disappears completely when $M$ is a sub-Riemannian manifold (for instance, the Heisenberg group).

As a toy model to study non-dispersive Hamiltonian equation, G\'erard and Grellier \cite{Gerard-10} introduced the \emph{cubic Szeg\H{o} equation} :
\begin{align} \label{Szego}
i\partial_t u=\Pi(|u|^2 u), \;\;\;(t,\theta)\in \reals \times \mathbb T,
\end{align}
on $L^2_+(\mathbb T)$, where $\mathbb T=\reals  /2\pi \ints$ is the one-dimensional torus, which is identical to the unit circle in the complex plane. Notice that $L^2_+(\mathbb T)$ is the $L^2$ Hardy space which is defined by
\begin{align*}
L^2_+(\mathbb T)=\Big\{u=\sum_{k\in \ints}\hat u(k) e^{ik\theta}\in L^2(\mathbb T): \hat u(k)=0 \text{\;\;for all\;\;} k<0\Big\}.
\end{align*}
Furthermore, in (\ref{Szego}), the operator $\Pi: L^2(\mathbb T)\rightarrow L^2_+(\mathbb T)$ is the Szeg\H{o} projector onto the non-negative frequencies, i.e.,
\begin{align*}
\Pi\left(\sum_{k\in \ints}v_k e^{ik\theta}\right)=\sum_{k\geq 0}v_k e^{ik\theta}.
\end{align*}
We mention the following existence result, proved in \cite{Gerard-10}.
\begin{theorem} \cite{Gerard-10} \label{thm-Gerard}
Given $u_0\in H^{s}_+(\mathbb T)$, for some $s\geq \frac{1}{2}$, then the cubic Szeg\H{o} equation (\ref{Szego}) has a unique solution  $u\in C(\reals,H^s_+(\mathbb T))$.
\end{theorem}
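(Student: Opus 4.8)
The plan is to treat the cubic Szeg\H{o} equation \eqref{Szego} as an autonomous differential equation $\dot u=F(u)$, with $F(u):=-i\,\Pi(\abs{u}^2u)$, posed in the scale of Hilbert spaces $H^s_+(\mathbb T)$: to solve it locally in time by the Cauchy--Lipschitz (Picard) theorem in the range $s>\tfrac12$, where $H^s(\mathbb T)$ is a Banach algebra; to upgrade the local solution to a global one using the conservation laws of \eqref{Szego} together with a logarithmic interpolation inequality of Brezis--Gallouet type; and finally to reach the borderline exponent $s=\tfrac12$ by regularizing the initial datum, passing to a limit, and invoking uniqueness at that regularity. I expect this last step to be the genuine obstacle.

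\emph{Local well-posedness for $s>\tfrac12$.} The Szeg\H{o} projector $\Pi$ is the Fourier multiplier by the indicator of $\{k\ge0\}$, hence it is bounded on every $H^s(\mathbb T)$, with range $H^s_+(\mathbb T)$. Since $H^s(\mathbb T)$ is a Banach algebra for $s>\tfrac12$, the map $u\mapsto F(u)=-i\,\Pi(\abs{u}^2u)$ is a smooth vector field on $H^s_+(\mathbb T)$ satisfying, on bounded subsets, the Lipschitz bound $\norm{F(u)-F(v)}_{H^s}\le C\bigl(\norm{u}_{H^s}^2+\norm{v}_{H^s}^2\bigr)\norm{u-v}_{H^s}$. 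By the Cauchy--Lipschitz theorem in the Banach space $H^s_+(\mathbb T)$, each $u_0\in H^s_+(\mathbb T)$ produces a unique maximal solution $u\in C\bigl((-T_*,T^*),H^s_+(\mathbb T)\bigr)$ together with the blow-up alternative: if $T^*<\infty$ then $\norm{u(t)}_{H^s}\to\infty$ as $t\uparrow T^*$, and likewise at $-T_*$.

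\emph{Conservation laws and global existence for $s>\tfrac12$.} Using the identity $\innerprod{\Pi f,v}_{L^2}=\innerprod{f,v}_{L^2}$ valid for $v\in L^2_+(\mathbb T)$, together with the self-adjointness of $D:=-i\partial_\theta$, a direct computation shows that the mass $\norm{u}_{L^2}^2$, the momentum $\innerprod{Du,u}_{L^2}=\sum_{k\ge0}k\,\abs{\hat u(k)}^2$ and the energy $\tfrac14\norm{u}_{L^4}^4$ are conserved along the flow---these being the invariants attached by the Hamiltonian structure of \eqref{Szego} to its gauge, translation and time-translation symmetries. On $L^2_+(\mathbb T)$ the quantity $\norm{u}_{L^2}^2+\innerprod{Du,u}_{L^2}$ is equivalent to $\norm{u}_{H^{1/2}}^2$, so $\norm{u(t)}_{H^{1/2}}$ stays bounded, for all $t$, by a constant $A$ depending only on $\norm{u_0}_{H^{1/2}}$. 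Next, since $\Lambda^s:=(1-\partial_\theta^2)^{s/2}$ commutes with $\Pi$ and $\Lambda^s u\in L^2_+(\mathbb T)$, one has $\bigl|\tfrac{d}{dt}\norm{u}_{H^s}^2\bigr|=2\bigl|\operatorname{Im}\innerprod{\Lambda^s(\abs{u}^2u),\Lambda^s u}_{L^2}\bigr|\le2\norm{\abs{u}^2u}_{H^s}\norm{u}_{H^s}$, and the tame (Moser) product estimate gives $\norm{\abs{u}^2u}_{H^s}\le C\norm{u}_{L^\infty}^2\norm{u}_{H^s}$; invoking then the Brezis--Gallouet inequality on $\mathbb T$, namely $\norm{u}_{L^\infty}^2\le C_s\norm{u}_{H^{1/2}}^2\log\!\bigl(e+\norm{u}_{H^s}^2/\norm{u}_{H^{1/2}}^2\bigr)$ for $s>\tfrac12$, and the a priori bound $A$, one arrives at a differential inequality of Osgood type, $\bigl|\tfrac{d}{dt}y\bigr|\le CA^2\,y\,\log(e+y/A^2)$ with $y(t)=\norm{u(t)}_{H^s}^2$, whose solutions stay finite on every bounded time interval (they grow at most double-exponentially). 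Hence $\norm{u(t)}_{H^s}$ cannot blow up in finite time, for $t$ of either sign, so $T_*=T^*=\infty$ and $u\in C(\reals,H^s_+(\mathbb T))$; uniqueness is already contained in the Cauchy--Lipschitz conclusion.

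\emph{The borderline exponent $s=\tfrac12$.} Given $u_0\in H^{1/2}_+(\mathbb T)$, put $u_0^{(n)}:=\sum_{0\le k\le n}\hat u_0(k)e^{ik\theta}\in H^1_+(\mathbb T)$, so $u_0^{(n)}\to u_0$ in $H^{1/2}$, and let $u^{(n)}\in C(\reals,H^1_+(\mathbb T))$ be the global solutions obtained above. The conservation laws give $\sup_n\sup_t\norm{u^{(n)}(t)}_{H^{1/2}}\le A$, while the equation together with the Sobolev embedding $H^{1/2}_+(\mathbb T)\hookrightarrow L^6(\mathbb T)$ yields $\sup_n\sup_t\norm{\partial_t u^{(n)}(t)}_{L^2}\le CA^3$; an Arzel\`a--Ascoli argument using the compact embedding $H^{1/2}\hookrightarrow H^{1/2-\e}$ then extracts a subsequence converging, for every $T>0$, in $C([-T,T],H^{1/2-\e}_+(\mathbb T))$ and weakly-$*$ in $L^\infty((-T,T),H^{1/2}_+(\mathbb T))$ to some $u$. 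The remaining---and principal---difficulty is to promote this to strong convergence in $C([-T,T],H^{1/2}_+(\mathbb T))$ and to identify $u$ as a solution of \eqref{Szego}: at the regularity exactly $H^{1/2}$ the algebra property is lost (indeed $\abs{u}^2u$ just fails to lie in $H^{1/2}$), so the naive energy estimate for a difference $u^{(n)}-u^{(m)}$ does not close. I would overcome this by exploiting the completely integrable structure of the cubic Szeg\H{o} equation---specifically the Lax pair built on the Hankel operator $H_u(h)=\Pi(u\bar h)$, which is Hilbert--Schmidt precisely when $u\in H^{1/2}_+(\mathbb T)$, with $\norm{H_u}_{\mathrm{HS}}^2=\sum_{m\ge0}(m+1)\abs{\hat u(m)}^2$, and which undergoes an isospectral unitary deformation under the flow---to obtain a continuous-dependence estimate on bounded subsets of $H^{1/2}_+(\mathbb T)$. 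Granting such an estimate, $(u^{(n)})$ is Cauchy in $C([-T,T],H^{1/2}_+(\mathbb T))$, its limit $u\in C(\reals,H^{1/2}_+(\mathbb T))$ satisfies \eqref{Szego}, and uniqueness in that class follows from the same estimate, completing the proof.
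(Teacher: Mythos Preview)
The paper does not prove this theorem at all: it is quoted verbatim from \cite{Gerard-10} as background, with no argument supplied here. There is therefore no proof in the present paper to compare your proposal against.

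As to the proposal itself: for $s>\tfrac12$ your scheme (Cauchy--Lipschitz in the Banach algebra $H^s_+$, conservation of mass and momentum to pin down $\norm{u(t)}_{H^{1/2}}$, and a Brezis--Gallouet logarithmic inequality to close the $H^s$ energy estimate) is correct and is indeed the route taken in \cite{Gerard-10}. At the endpoint $s=\tfrac12$, however, your argument contains a genuine gap that you yourself flag: you write ``Granting such an estimate\ldots'' for the continuous-dependence bound on bounded sets of $H^{1/2}_+$, but you never supply it. Moreover, the mechanism you propose for producing it---the Lax pair and the isospectral deformation of $H_u$---is not how \cite{Gerard-10} proceeds: there the $H^{1/2}$ well-posedness is established in Section~2 by direct trilinear estimates exploiting the restriction to non-negative frequencies (in particular the identity $\Pi(f\bar g)=\Pi(f\,\overline{\Pi g})$ for $f\in L^2_+$), \emph{before} the Lax pair is even introduced. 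The integrable structure is used later for other purposes (higher conservation laws, trace-norm bounds), not to close the endpoint local theory. So while your outline is sound above the endpoint, the $s=\tfrac12$ case remains unproved in your write-up, and the suggested fix points in a different direction from the original reference.
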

Moreover, it has been shown in \cite{Gerard-10} that the Szeg\H{o} equation (\ref{Szego}) is completely integrable in the sense of admitting a Lax pair structure, and as a consequence, it possesses an infinite number of conservation laws.

Replacing the Fourier series by the Fourier transform, one can analogously define the Szeg\H{o} equation on $$L^2_+(\reals)=\{\phi\in L^2(\reals): \text{supp\;} \hat{\phi} \subset [0,\infty) \}.$$
In \cite{Pocovnicu-11}, Pocovnicu constructed explicit spatially real analytic solutions for the cubic Szeg\H{o} equation defined on $L^2_+(\reals)$. For the initial datum
$u_0=\frac{2}{x+i}-\frac{4}{x+2i}$, it was discovered that one of the poles of the explicit real analytic solution $u(t,x)$ approaches the real line, as $|t|\rightarrow \infty$; more precisely, the imaginary part of a pole decreases in the speed $O(\frac{1}{t^2})$.
Thus, the radius of analyticity of $u(t,x)$ shrinks algebraically to zero, as $|t|\rightarrow \infty$. This phenomenon gives rise to the following questions: for analytic initial data, does the solution remain spatial analytic for all time? If so, can one estimate, from below,  the radius of analyticity? In this manuscript, we attempt to answer these questions by employing the technique of the so--called Gevrey class of analytic functions.

The Gevrey classes of real analytic functions are characterized by an exponential decay of their Fourier coefficients. If we set $A:=\sqrt{I-\Delta}$, they are defined by $\mathcal D(A^s e^{\sigma A})$, which consist of all $L^2$ functions $u$ such that $\norm{A^s e^{\sigma A}u}_{L^2(\mathbb T)}$ is finite, where $s\geq 0$, $\sigma>0$ (see e.g. \cite{Ferrari-Titi-98, Foias-Temam-89, Levermore-Oliver-97}). Note, if $\sigma=0$, then $\mathcal D(A^s e^{\sigma A})=\mathcal D(A^s)\cong H^s(\mathbb T)$. However, if $\sigma>0$, then $\mathcal D(A^s e^{\sigma A})$ is the set of real analytic functions with the radius of analyticity bounded below by $\sigma$.
Also notice, $\mathcal D(A^s e^{\sigma A})$ is a Banach algebra provided $s>\frac{1}{2}$ for 1D (see \cite{Ferrari-Titi-98}).

The so--called method of Gevrey estimates has been extensively used in literature to establish regularity results for nonlinear evolution equations. It was first introduced for the periodic Navier-Stokes equations in \cite{Foias-Temam-89}, and studied later in the whole space in \cite{Oliver-Titi-00}, moreover, it was extended to nonlinear analytic parabolic PDE's in \cite{Ferrari-Titi-98}, and for Euler equations in \cite{Kukavica-Vicol-09, Larios-Titi-10, Levermore-Oliver-97} (see also references therein). Recently, this method was  also applied to establish analytic solutions for  quasilinear wave equations \cite{Guo-Titi-12}.

In this paper, we employ a special such class based on the space $W$ of functions with summable Fourier series. For a given function $u\in L^1(\mathbb T)$, $u=\sum_{k\in \ints} \hat u(k) e^{ik\theta}$,
$\theta\in \mathbb T$, then the Wiener norm of $u$ is given by
\begin{align} \label{ell}
\norm {u}_W=\norm{\hat u}_{\ell^1}=\sum_{k\in \ints} |\hat u(k)|.
\end{align}
Notice that $W$  is a Banach algebra (Wiener algebra).

Based on the Wiener algebra, the following special Gevrey norm is defined in \cite{Oliver-Titi-01}:
\begin{align} \label{G-norm}
\norm{u}_{G_{\s}(W)}=\sum_{k\in \ints}e^{\s|k|}|\hat u(k)|, \;\;\;\sigma\geq 0.
\end{align}
If $u\in L^1(\mathbb T)$ is such that $\norm{u}_{G_{\s}(W)}<\infty$, then we write $u\in G_{\s}(W)$.

It is known that the Gevrey class $G_{\s}(W)$ is a Banach algebra \cite{Oliver-Titi-01}, and  it characterizes the real analytic functions if $\s>0$. In particular, a function $u\in C^{\infty}(\mathbb T)$ is real analytic with uniform radius of analyticity $\rho$, if and only if, $u\in G_{\s}(W)$, for every $0<\s<\rho$.
\bigskip

Now, we state the main result of this paper.
\begin{theorem} \label{main-thm}
Assume $u_0\in L^2_+(\mathbb T)\cap  G_{\sigma}(W)$, for some $\sigma>0$. Then the unique solution $u(t)$ of (\ref{Szego}) provided by Theorem \ref{thm-Gerard} satisfies $u(t)\in G_{\t(t)}(W)$, for all $t\in \reals$,
where $\t(t)= \sigma e^{-\l |t|}$, with some $\l>0$ depending on $u_0$.  More precisely, there exists $C_0>0$, specified in (\ref{C0}) below, such that $\norm{u(t)}_{G_{\t(t)}(W)}\leq C_0$, for all $t\in \reals$.
\end{theorem}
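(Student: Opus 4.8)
The plan is to run a Gevrey energy estimate directly on the Fourier side. For $\sigma \geq 0$ write $v(t) = u(t)$ and consider the quantity $\norm{u(t)}_{G_{\tau(t)}(W)} = \sum_{k\geq 0} e^{\tau(t)k}|\widehat{u(t)}(k)|$, where $\tau(t)$ is a decreasing function to be chosen. First I would justify differentiating this sum in $t$: using the equation $i\partial_t u = \Pi(|u|^2 u)$ together with $\frac{d}{dt}|\widehat u(k)| \leq |\partial_t \widehat u(k)|$ (valid a.e., with the usual care where $\widehat u(k)$ vanishes), one gets formally
\begin{align*}
\frac{d}{dt}\norm{u(t)}_{G_{\tau}(W)} \leq \dot\tau(t)\sum_{k\geq 0} k\, e^{\tau k}|\widehat u(k)| + \sum_{k\geq 0} e^{\tau k}\big|\widehat{\Pi(|u|^2 u)}(k)\big|.
\end{align*}
The first term is negative (since $\dot\tau<0$) and will be used to absorb part of the nonlinear contribution. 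The main work is to bound the nonlinear term by something like $C\,\norm{u}_{G_\tau(W)}^2 \cdot \big(\text{something involving } \sum k e^{\tau k}|\widehat u(k)|\big)$ plus lower-order terms, so that the bad factor can be dominated by the dissipation-like term $|\dot\tau|\sum k e^{\tau k}|\widehat u(k)|$.

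The key algebraic step is the convolution estimate for the cubic nonlinearity in the Gevrey–Wiener norm. Since $\Pi$ only drops Fourier modes, $\big|\widehat{\Pi(|u|^2u)}(k)\big| \leq \widehat{|u|^2|u|}(k)$-type bounds hold, and $\widehat{|u|^2u}(k) = \sum_{k_1 - k_2 + k_3 = k}\overline{\widehat u(k_2)}\,\widehat u(k_1)\widehat u(k_3)$ (using $\widehat{\bar u}(k) = \overline{\widehat u(-k)}$ and that $u\in L^2_+$). The crucial inequality is the subadditivity/triangle property $e^{\tau k} \leq e^{\tau(|k_1|+|k_2|+|k_3|)} = e^{\tau|k_1|}e^{\tau|k_2|}e^{\tau|k_3|}$ whenever $k = k_1 - k_2 + k_3$ and $k\geq 0$, $k_i\geq 0$ (here all $k_i\geq 0$ so $|k_i|=k_i$, and $k = k_1-k_2+k_3 \leq k_1+k_2+k_3$). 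This immediately gives $\sum_k e^{\tau k}|\widehat{|u|^2u}(k)| \leq \norm{u}_{G_\tau(W)}^3$. To capture a factor of $k$ on the left (which is what we need to pair with $\dot\tau$), I would instead write $k = k_1 - k_2 + k_3 \leq k_1 + k_3$ on the support (discarding $-k_2\le 0$... but $k_2$ can exceed $k_1+k_3$, so more carefully $k\le k_1+k_2+k_3$, and split $k\,e^{\tau k} \le (k_1+k_2+k_3)e^{\tau k_1}e^{\tau k_2}e^{\tau k_3}$), yielding
\begin{align*}
\sum_{k\geq 0} k\, e^{\tau k}\big|\widehat{\Pi(|u|^2u)}(k)\big| \leq 3\,\norm{u}_{G_\tau(W)}^2\, \sum_{k\geq 0} k\, e^{\tau k}|\widehat u(k)|.
\end{align*}
Hmm — that has a factor $k$ on *both* sides, which is what makes the dissipative absorption work, but it does not directly bound the first (undifferentiated) nonlinear sum. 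So the cleaner route is: bound the plain nonlinear term by $\norm{u}_{G_\tau(W)}^3$ and, separately, when I want room, use the conservation laws.

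The decisive input is the a priori bound coming from the conservation laws of the Szegő equation: by Theorem \ref{thm-Gerard} and the integrability structure mentioned after it, $\norm{u(t)}_{H^{1/2}}$ (equivalently, enough Sobolev norms) stays bounded by a constant $M_0 = M_0(u_0)$ for all $t$; in particular $\norm{u(t)}_{L^\infty}$ and $\norm{u(t)}_W$ are controlled. Set $y(t) = \norm{u(t)}_{G_{\tau(t)}(W)}$. Combining the pieces I expect an inequality of the shape
\begin{align*}
\frac{d}{dt} y(t) \leq \big(\dot\tau(t) + C\,\norm{u(t)}_W^2\big)\sum_{k\geq 0} k\, e^{\tau(t) k}|\widehat{u(t)}(k)| + C\,\norm{u(t)}_W^2\, y(t),
\end{align*}
where the second term collects the contributions of $k$ in the convolution that land on a low-frequency factor (bounded by $\norm{u}_W$ rather than $y$). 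Now choose $\lambda := C M_0^2$ (so $C\norm{u(t)}_W^2 \leq \lambda$) and $\tau(t) = \sigma e^{-\lambda|t|}$, so that $\dot\tau(t) = -\lambda\tau(t)\,\mathrm{sgn}(t)$; for $t>0$ the bracket $\dot\tau + C\norm{u}_W^2 \le -\lambda\tau + \lambda$... this is not obviously $\le 0$ unless $\tau\ge 1$, so instead I keep the bracket and bound $-\lambda\tau\sum k e^{\tau k}|\widehat u(k)| + \lambda \sum k e^{\tau k}|\widehat u(k)|$ — the positive part must be reabsorbed. The correct bookkeeping (this is the heart of the argument and the step I expect to be the main obstacle) is to exploit that $k e^{\tau k} \le \frac{1}{e\sigma'}\,e^{(\tau+\sigma')k}$ for any $\sigma'>0$, trading a derivative for a slightly larger Gevrey index; picking $\sigma'$ comparable to $\tau(t)$ and using $\tau(t)\le\sigma$ one converts $\sum k e^{\tau k}|\widehat u(k)|$ into $\frac{C}{\tau(t)}\,y(t)$-type terms, at the cost of constants depending on $\sigma$. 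After this reduction one obtains a closed scalar differential inequality $\dot y \le \frac{C'}{\tau(t)}\,y + \text{(harmless)}$, and since $\int_0^\infty \tau(t)^{-1}\,dt = \int_0^\infty \sigma^{-1}e^{\lambda t}\,dt$ diverges, a naive Grönwall gives a doubly-exponential bound — so in fact one must be more careful and keep the *negative* $\dot\tau$ term paired exactly with the $k$-weighted nonlinear term, choosing $\lambda$ so that $|\dot\tau| = \lambda\tau$ dominates $C\norm{u}_W^2 \cdot (\text{the genuinely } k\text{-weighted piece})$ with no loss, i.e. the piece where the factor $k$ sits on a *high*-frequency leg and is honestly bounded by $\sum k e^{\tau k}|\widehat u(k)|$ times $\norm{u}_W^2$. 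With $\lambda = C M_0^2$ that bracket is $\le 0$, the $k$-weighted terms disappear, and we are left with $\dot y \le C M_0^2\, y$... which still grows; the resolution is that the *remaining* term is not $Cy$ but rather $C\norm{u}_W^2 \cdot \norm{u}_W \cdot (\text{const})$, i.e. genuinely bounded by $M_0^3$, independent of $y$ — because once all three high/low splittings are done, every surviving term either carries the absorbing $k$-weight or involves at most one factor of $y$ to the *first* power with coefficient made small. Carrying out this trichotomy carefully, I expect to land on $\dot y(t) \le 0$ for the chosen $\tau$, hence $y(t) \le y(0) = \norm{u_0}_{G_\sigma(W)} =: C_0$ for all $t$, which is exactly the assertion with the constant $C_0$ as in (\ref{C0}). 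The analogous computation for $t<0$ is identical by time-reversal symmetry of the equation.

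In summary: (1) set up the Fourier-side differential inequality for $y(t) = \norm{u(t)}_{G_{\tau(t)}(W)}$; (2) prove the cubic Gevrey–Wiener convolution estimate, tracking separately the terms with the $k$-weight on a high- versus low-frequency leg; (3) invoke the conservation laws to get a uniform-in-time bound $\norm{u(t)}_W \le M_0$; (4) choose $\lambda \asymp M_0^2$ and $\tau(t) = \sigma e^{-\lambda|t|}$ so that the negative $\dot\tau$-term exactly absorbs the $k$-weighted nonlinear term, leaving $\dot y \le 0$; (5) integrate. The main obstacle is step (2)–(4): making the absorption work with the *exponentially shrinking* $\tau(t)$ rather than a constant one, which forces the $\lambda$ in $\tau(t) = \sigma e^{-\lambda|t|}$ to be precisely calibrated to the $W$-norm bound, and requires the convolution estimate to be done with enough care that no term is left with a coefficient that is both unbounded and attached to a power of $y$ greater than one.
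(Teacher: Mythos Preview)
Your framework is right --- differentiate $y(t)=\sum_{k\ge 0}e^{\tau(t)k}|\hat u(t,k)|$, use the cubic convolution structure, and invoke the uniform bound $\norm{u(t)}_W\le C_1$ coming from the conservation laws --- and this is exactly how the paper proceeds. But the absorption step, which you yourself flag as ``the main obstacle,'' has a genuine gap as written, and your proposal oscillates between several incompatible mechanisms without settling on one that works.

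The precise issue is this. After the convolution estimate (using $e^{\tau k}=e^{\tau n}e^{-\tau j}e^{\tau m}$ for $k=n-j+m$, $n,j,m\ge 0$), the nonlinear contribution is controlled by $y(t)^2\norm{u(t)}_W$, not by $\norm{u(t)}_W^2\, y(t)$ as in the displayed inequality you write down. So the term you need to absorb is quadratic in $y$, and there is no direct way to convert $y^2\norm{u}_W$ into $(\text{const})\cdot\sum k e^{\tau k}|\hat u(k)|$ as you suggest. The trick you try, $k e^{\tau k}\le \frac{1}{e\sigma'}e^{(\tau+\sigma')k}$, goes the wrong way (it \emph{removes} the $k$-weight rather than creating one) and, as you note, leads to a Gr\"onwall factor $\int \tau(t)^{-1}\,dt=+\infty$.

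The missing idea is the elementary splitting $e^{x}\le e + x^{1/2}e^{x}$ for $x\ge 0$. Applying it to $e^{\tau k}$ yields
\[
y^2\norm{u}_W \;\le\; 2e^2\norm{u}_W^3 \;+\; 2\,\tau\Big(\sum_k k\,e^{\tau k}|\hat u(k)|\Big)\, y\,\norm{u}_W,
\]
and the crucial feature is the factor of $\tau$ in front of the $k$-weighted sum. Now choosing $\tau'(t)=-4C_1\,\tau(t)\,y(t)$ (an ODE for $\tau$, not an explicit formula) exactly cancels this piece against half of the $\tau'\sum k e^{\tau k}|\hat u(k)|$ term. The remaining half, together with the lower bound $\tau\sum_k k e^{\tau k}|\hat u(k)|\ge y-eC_1$ (obtained by splitting $k\ge 1/\tau$ vs.\ $k<1/\tau$), gives the closed quadratic inequality
\[
\frac{dy}{dt}\;\le\;-2C_1 y^2 + 2eC_1^2\,y + 2e^2C_1^3,
\]
whose right side is negative for $y>\tfrac{1+\sqrt5}{2}\,eC_1$. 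This yields $y(t)\le C_0:=\max\{\norm{u_0}_{G_\sigma(W)},\,\tfrac{1+\sqrt5}{2}eC_1\}$ --- not $\dot y\le 0$ as you claim --- and then $\tau(t)\ge \sigma e^{-4C_0C_1 t}$ follows from the ODE for $\tau$. Note in particular that $\lambda=4C_0C_1$ depends on $\norm{u_0}_{G_\sigma(W)}$, not just on $M_0=\norm{u}_W$, and $C_0$ is not simply $\norm{u_0}_{G_\sigma(W)}$.
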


Essentially, Theorem \ref{main-thm} shows the persistency of the spatial analyticity of the solution $u(t)$ for all time $t\in (-\infty,\infty)$ provided the initial datum is analytic. Recall that $\t(t)$ is a lower bound of the radius of spatial analyticity of $u(t)$. Thus, it implies that the radius of analyticity of $u(t)$ cannot shrink faster than exponentially, as $|t|\rightarrow \infty.$

\begin{remark}
The precise definition of $\l $ in Theorem \ref{main-thm} is given in (\ref{def-tau}) below. In fact, as shown in Remark \ref{improvement},
one can prove that the radius $\rho (t)$ of real analyticity of $u(t)$ satisfies, for every $s>1$,
$$\limsup_{t\rightarrow \infty} \left| \frac{\log \rho (t)}{t} \right| \le K_s \norm {u_0}_{H^s}^2\ ,$$
which is independent of the $G_{\s }(W)$ norm of $u_0$. The optimality of such an estimate is not known. However, let us mention the following two recent results in \cite{Gerard-13}. Firstly, if $u_0$ is a rational function of $e^{i\theta }$ with no poles in the closed unit disc, then so is $u(t)$,
and $\rho (t)$ remains bounded from below by some positive constant for all time. Secondly, this bound is by no means uniform. Indeed, starting with
$$u_0=e^{i\theta }+\e \ ,\ \e >0\ ,$$
one can show that
$$\rho \left (\frac \pi \e \right )=O(\e ^2)\ .$$
This phenomenon is to be compared to the one displayed by Kuksin in \cite{Kuksin} for NLS on the torus with small  dispersion coefficient.

Finally, let us mention a recent work by Haiyan Xu \cite{Xu-13}, who found a Hamiltonian perturbation of the cubic Szeg\H{o} equation which admits solutions
with exponentially shrinking radius of analyticity. Moreover, one can check that the method of Theorem \ref{main-thm} applies as well to this perturbation, so that the above result is optimal in the case of this equation.

\end{remark}

By investigating the steady state of the cubic nonlinear Schr\"odinger equation, it is demonstrated in \cite{Oliver-Titi-01} that, by employing the Gevrey class $G_{\s}(W)$, one can obtain a more accurate estimate of the lower bound of the radius of analyticity of solutions to differential equations, compared to the estimate derived from using the regular Gevrey classes $\mathcal D(A^s e^{\s A})$ (see also the discussion in \cite{Guo-Titi-12}).
Such observation is verified again in this paper, since we find that, in studying the cubic Szeg\H{o} equation, the Gevrey class method, based on $G_{\s}(W)$, provides an estimate of the lower bound of the analyticity radius of the solution, which has a substantially slower shrinking rate, than the estimate obtained from using the  classes $\mathcal D(A^s e^{\s A})$. One may refer to Remark \ref{rmk-compare} for this comparison.

Throughout, we study the cubic Szeg\H{o} equation defined on the torus $\mathbb T$. However, by using Fourier transforms instead of Fourier series, our techniques are also applicable to the same equation defined on the real line, and similar  regularity results and estimates can be obtained as well (see also ideas from \cite{Oliver-Titi-00}).

Moreover, Theorem \ref{main-thm} is also valid under the framework of general Gevrey classes, i.e.,
intermediate spaces between the space of $C^{\infty}$ functions and real analytic functions. Indeed, if we define Gevrey classes $G_{\sigma}^{\gamma}(W)$ based on the norm
\begin{align*}
\norm{u}_{G_{\sigma}^{\gamma}(W)}=\sum_{k\in \mathbb Z} e^{\sigma |k|^{\gamma}}|\hat u (k)|, \;\;\gamma\in (0,1],
\end{align*}
then, $G_{\sigma}^{\gamma}(W)$ are Banach algebras, due to the elementary inequality
$e^{\sigma (k+j)^{\gamma}}\leq e^{\sigma k^{\gamma}} e^{\sigma j^{\gamma}}$, for $\gamma\in (0,1]$.
Thus, the proof of Theorem \ref{main-thm} works equally for $G_{\sigma}^{\gamma}(W)$, where $\gamma\in (0,1]$. For the sake of clarity, we demonstrate our technique for $\gamma=1$, i.e., the Gevrey class of real analytic functions.

\bigskip

\section{Proof of the main result}
Before we start the proof of  the main result, the following proposition should be mentioned.
\begin{proposition} \label{prop}
Assume $u_0\in H^s_+(\mathbb T)$, for some $s>1$. Let $u$ be the unique global solution of (\ref{Szego}), furnished by Theorem \ref{thm-Gerard}. Then,
\begin{align} \label{ell-bound}
\norm{u(t)}_{W}\leq C(s)\norm{u_0}_{H^s}, \text{\;\;for all\;\;} t\in \reals.
\end{align}
\end{proposition}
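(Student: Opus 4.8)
The plan is to exploit the conservation laws of the Szegő equation. Since $W = G_0(W)$ and we only want a time-uniform bound on $\norm{u(t)}_W$, the key point is that $\norm{u}_W = \sum_k |\hat u(k)| \lesssim \norm{u}_{H^s}$ for $s > 1/2$ by Cauchy–Schwarz (split $\sum |\hat u(k)| = \sum \langle k\rangle^{-s}\langle k\rangle^{s}|\hat u(k)| \le (\sum \langle k\rangle^{-2s})^{1/2}\norm{u}_{H^s}$, and $\sum \langle k\rangle^{-2s}<\infty$ when $s>1/2$). So it suffices to produce a time-uniform bound on $\norm{u(t)}_{H^s}$ for some $s$ with $1/2 < s \le$ (the given) $s$; in fact it is enough to control $\norm{u(t)}_{H^1}$, since then $\norm{u(t)}_W \le C\norm{u(t)}_{H^1}$, and one can afterwards note $\norm{u_0}_{H^1}\le \norm{u_0}_{H^s}$ to match the stated right-hand side (or interpolate to keep the exact exponent).

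The main step is therefore: \emph{the $H^{1/2}$ norm — equivalently a quantity equivalent to it — is conserved, and more is true}. Recall from \cite{Gerard-10} that the Szegő equation is Hamiltonian with energy $E(u) = \frac14\int_{\mathbb T}|u|^4$, mass $Q(u)=\int_{\mathbb T}|u|^2 = \norm{u}_{L^2}^2$, and momentum $M(u) = \innerprod{D u, u}$ with $D = -i\partial_\theta$, all conserved along the flow; here $M(u) = \sum_{k\ge 0} k|\hat u(k)|^2$, which together with $Q$ controls $\norm{u}_{H^{1/2}}^2$. That gives a uniform bound on $\norm{u(t)}_{H^{1/2}}$, but $1/2$ is exactly the borderline that fails the Cauchy–Schwarz argument above, so I need one of the higher conservation laws. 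The Lax pair structure of \cite{Gerard-10} furnishes an infinite hierarchy of conserved quantities controlling all the $H^{s}$ norms with $s$ a positive integer (or half-integer); the cleanest route is to invoke that the Hankel operator $H_u$ (resp. $K_u$) has a conserved trace-class/Schatten spectrum, yielding in particular a conserved quantity equivalent to $\norm{u}_{H^1}^2$ modulo lower-order terms already controlled by $Q$ and $M$. Concretely I would write the $H^1$-level conservation law as $\norm{u(t)}_{H^1}^2 \le P\big(\norm{u_0}_{H^1}\big)$ for an explicit polynomial $P$ coming from the hierarchy, then chain the inequalities $\norm{u(t)}_W \le C\norm{u(t)}_{H^1} \le C\,P(\norm{u_0}_{H^1})^{1/2} \le C(s)\norm{u_0}_{H^s}$.

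The hard part is pinning down \emph{which} conserved quantity to use and verifying it dominates $\norm{u}_{H^1}$ cleanly; the conservation laws from the Lax pair are naturally expressed via traces of powers of $H_u$ and $K_u$ and are not individually equal to a Sobolev norm, so some algebra is needed to extract a usable coercive bound (and to keep the constant depending only on $\norm{u_0}_{H^s}$, not on finer data). An alternative, more self-contained route that avoids the full hierarchy: differentiate $\norm{u}_{H^1}^2$ directly along the equation, $\frac{d}{dt}\norm{u}_{H^1}^2 = 2\,\mathrm{Re}\,\innerprod{(I+D^2)\partial_t u, u} = 2\,\mathrm{Re}\,\innerprod{(I+D^2)(-i\Pi(|u|^2u)), u}$, estimate the right-hand side using the Wiener-algebra/Sobolev product estimates and the already-conserved $L^2$ and $H^{1/2}$ norms, and close a (possibly exponentially growing) Grönwall inequality — but that would only give a time-\emph{dependent} bound, so the conservation-law route is the one that actually yields the stated uniform-in-$t$ estimate. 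I would go with the conservation laws.
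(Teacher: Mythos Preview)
Your plan has a genuine gap: there is \emph{no} conserved quantity for the cubic Szeg\H{o} equation that dominates $\norm{u}_{H^1}$ (or any $\norm{u}_{H^s}$ with $s>\tfrac12$). The eigenvalues of $H_u^2$ are conserved by the Lax pair, so every symmetric function of them is conserved; but these quantities live at the $H^{1/2}$ level. For instance $Tr(H_u^2)=\sum_{n\ge 0}(n+1)|\hat u(n)|^2\simeq \norm{u}_{H^{1/2}}^2$, while higher traces $Tr(H_u^{2n})$ are \emph{weaker}, being controlled by $\norm{H_u}_{\mathrm{op}}^{2n-2}Tr(H_u^2)$. In fact the paper itself records (see the remark following the main proof) that for $s>\tfrac12$ one only has $\norm{u(t)}_{H^s}^2\le C(s)e^{2\lambda s|t|}$, and subsequent work shows this growth can genuinely occur. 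So the step ``conserved quantity equivalent to $\norm{u}_{H^1}^2$ modulo lower-order terms'' does not exist, and routing the argument through a uniform $H^1$ bound is a dead end.

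The paper's proof bypasses Sobolev norms entirely: the conserved quantity used is the \emph{trace norm} $Tr(|H_u|)=\sum_j\rho_j$ (sum of singular values), and the key inequality is
\[
\tfrac12\norm{u}_{W}\ \le\ Tr(|H_u|)\ \le\ C_s\norm{u}_{H^s}\quad(s>1),
\]
i.e.\ Peller's theorem that the trace class of Hankel operators corresponds to the Besov space $B_{1,1}^1$, which embeds into $W$ from above and into which $H^s$ ($s>1$) embeds from below. Then $\norm{u(t)}_W\le 2\,Tr(|H_{u(t)}|)=2\,Tr(|H_{u_0}|)\le 2C_s\norm{u_0}_{H^s}$ in one line. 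You were close when you mentioned the conserved Schatten spectrum; the fix is to use the $\mathcal S_1$ norm \emph{directly} as an upper bound for $\norm{u}_W$, rather than trying to extract an $H^1$ bound from it.
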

\begin{proof}
The proof can be found in  \cite{Gerard-10}, we recall it here. In  \cite{Gerard-10}, it has been shown that the cubic Szeg\H{o} equation admits a Lax pair $(H_u, B_u)$ ,  where $H_u$ is the Hankel operator of symbol $u$, defined by
\begin{align}\label{hankel}
H_u(h)=\Pi (u\overline h)\ .
\end{align}
Thus the trace norm $Tr(|H_{u(t)}|)$ is a conserved quantity. By Peller's theorem \cite{Pe80}, \cite{P}, $Tr(|H_u|)$ is equivalent to the $B_{1,1}^1$ norm of $u$. In particular, for every $s>1$,
\begin{align} \label{tracebound}
\frac 1 2\norm{u}_{W}\leq  Tr(|H_u|)\leq C_s \norm{u}_{H^s}\ .
\end{align}
Hence
$$\norm{u(t)}_{W}\leq 2 Tr(|H_{u(t)}|)=2 Tr(|H_{u_0}|)\le 2C_s \norm{u_0}_{H^s}\ .$$
The proof is complete. \end{proof}
For the sake of completion, we provide a straightforward proof of (\ref{tracebound})  in the Appendix.
We now start the proof of Theorem \ref{main-thm}.
\begin{proof}
Due to the assumption on the initial datum $u_0$, we know that $u_0$ is real analytic, and hence $u_0\in H^s_+(\mathbb T)$, for every non-negative real number $s$, in particular for $s\geq \frac{1}{2}$. Therefore, the global existence and uniqueness of the solution $u\in C(\reals,H^s_+(\mathbb T))$ are guaranteed by Theorem \ref{thm-Gerard}, for $s\geq \frac{1}{2}$.

Throughout, we focus on the positive time $t\geq 0$. By replacing $t$ by $-t$, the same proof works for the negative time.

We shall implement the Galerkin approximation method. Recall the cubic Szeg\H{o} equation is defined on the Hardy space $L^2_+(\mathbb T)$ with a natural basis $\{e^{ik\theta}\}_{k\geq 0}$.
Denote by $P_N$ the projection onto the span of $\{e^{ik\theta}\}_{0\leq k\leq N}$.
We let
\begin{align} \label{Appro-Sol}
u_N(t)=\sum_{k=0}^{N} \hat u_{N}(t,k)e^{ik\theta}
\end{align}
be the solution of the Galerkin system:
\begin{align} \label{Galerkin}
i\partial_t u_N=P_N \left(|u_N|^2 u_N\right),
\end{align}
with the initial condition $u_N(0)=P_N u_0$. We see that (\ref{Galerkin}) is an $N$-dimensional system of ODE with the conservation law
$$\norm {u_N}_{L^2}^2=\sum _{k=0}^N \vert \hat u_N(t;k)\vert ^2\ ,$$
and thus it has a unique solution $u_N\in C^\infty (\reals )$ on $\reals $.

Arguing exactly as in section 2 of \cite{Gerard-10}, we observe that
$$\sum _{k=0}^N k\vert \hat u_N (k)\vert ^2 $$
is a conservation law, hence $\norm {u_N(t)}_{H^{1/2}} $ is conserved, consequently, for every $s\ge \frac 12$ and every $T>0$,
\begin{align*}
\sup _N\sup _{t\in [0,T]}\norm{u_N(t)}_{H^s}<\infty \ .
\end{align*}
By using the equation (\ref{Galerkin}), one concludes that the same estimate holds for the time derivative $u_N'(t)$. Now, let us fix an arbitrary $T>0$. Since, moreover, the injection of $H^{s+\e }$ into $H^s$ is compact, we conclude from Ascoli's theorem that, up to a subsequence, $u_N(t)$ converge to some $\tilde u(t)$ in every $H^s$, uniformly for $t\in [0,T]$.
Then, it is straightforward to check, by letting $N\rightarrow \infty$, that $\tilde u$ is a solution of the cubic Szeg\H{o} equation (\ref{Szego}) on $[0,T]$ with the initial datum $u_0$. Since $u$ is the unique global solution furnished by Theorem {\ref{thm-Gerard}}, one must have $u=\tilde u$ on $[0,T]$. Since $H^s$ is contained into $W$ for every $s>\frac 12$, $u_N(t)$ tends to $u(t)$ in $W$ uniformly for $t\in [0,T]$. By Proposition \ref{prop}, there exists a constant $C_1>0$ such that
\begin{align} \label{C1}
\norm{u(t)}_{W}+1\leq C_1, \text{\;\;for all\;\;} t\in \reals.
\end{align}
Consequently, there exists $N'\in \mathbb N$ such that
\begin{align} \label{ge-0}
\norm{u_N(t)}_{W}\leq \norm{u(t)}_{W}+1\leq C_1,
\text{\;\;for all\;\;} N>N', \;\;t\in [0,T].
\end{align}
Also, recall that the initial condition $u_0 \in G_{\sigma}(W)$, i.e., $\norm{u_0}_{G_{\sigma}(W)}<\infty$. Since $u_N(0)=P_N u_0$, one has
\begin{align} \label{C2}
\sum_{k=0}^N e^{\sigma k}|\hat u_{N}(0,k)|\leq \norm{u_0}_{G_{\sigma}(W)}.
\end{align}
Define
\begin{align} \label{C0}
C_0:=\max\left\{\norm{u_0}_{G_{\sigma}(W)},\frac{1+\sqrt{5}}{2}e C_1 \right\},
\end{align}
where $C_1$ has been specified in (\ref{C1}).

Let us fix an arbitrary $N>N'$. We aim to prove
\begin{align} \label{cla}
\sum_{k=0}^N e^{\t(t)k}|\hat u_{N}(t,k)|\leq C_0, \text{\;\;for all\;\;} t\in [0,T],
\end{align}
with $\t(t)>0$ that will be specified in (\ref{def-tau}), below.

Notice, due to (\ref{Appro-Sol}) and (\ref{Galerkin}), we infer
\begin{align*}
\frac{d}{dt}\hat u_N(t,k)=-i\sum_{n-j+m=k \atop 0\leq n,j,m\leq  N}\hat u_N(t,n) \overline {\hat u_N(t,j)} \hat u_N(t,m), \;\;t\in [0,T], \;k=0,1,\ldots,N.
\end{align*}
Then, one can easily find that
\begin{align} \label{z11}
\frac{d}{dt} |\hat u_N(t,k)|
\leq \sum_{n-j+m=k \atop 0\leq n,j,m\leq N}|\hat u_N(t,n)| |\hat u_N(t,j)| |\hat u_N(t,m)| ,
\end{align}
for $k=0,1,\ldots,N$, and all $t\in [0,T]$.

In order to estimate the Gevrey norm, we consider
\begin{align*}
&\frac{d}{dt}\left(e^{\t(t)k}|\hat u_N(t,k)|\right)\notag\\
&=\t'(t)k e^{\t(t) k}|\hat u_N(t,k)|+e^{\t(t) k}\frac{d}{dt}|\hat u_N(t,k)|\notag\\
&\leq \t'(t)k e^{\t(t) k}|\hat u_N(t,k)|+e^{\t(t) k}\sum_{n-j+m=k \atop 0\leq n,j,m\leq N}|\hat u_N(t,n)| |\hat u_N(t,j)| |\hat u_N(t,m)|,
\end{align*}
for $k=0,1,\ldots,N$, and $t\in [0,T]$, where (\ref{z11}) has been used in the last inequality.

Summing over all integers $k=0,1,\cdots,N$ yields
\begin{align} \label{z1}
&\frac{d}{d t}\left(\sum_{k=0}^Ne^{\t(t) k}|\hat u_N(t,k)|\right) \notag\\
&\leq \t'\sum_{k=0}^N k e^{\t k }|\hat u_N(k)|+
\sum_{k=0}^N  e^{\t k} \left(\sum_{n-j+m=k \atop 0\leq n,j,m\leq N}|\hat u_N(n)| |\hat u_N(j)| |\hat u_N(m)|\right) \notag\\
&=\t'\sum_{k=0}^N k e^{\t k }|\hat u_N(k)|+
\sum_{k=0}^N \left(\sum_{n-j+m=k \atop 0\leq n,j,m\leq N} e^{\t n}|\hat u_N(n)| e^{-\t j} |\hat u_N(j)| e^{\t m}|\hat u_N(m)|\right)\notag\\
&\leq \t'\sum_{k=0}^N k e^{\t k }|\hat u_N(k)|+
\left(\sum_{k=0}^Ne^{\t k}|\hat u_N(k)|\right)^2 \left(\sum_{k=0}^N |\hat u_N(k)|\right),
\end{align}
where the last formula is obtained by using the Young's convolution inequality
and the fact $e^{-\t j}\leq 1$, for $\t$, $j\geq 0$.

Now, we estimate the second term on the right-hand side of (\ref{z1}). The key ingredient of the calculation is the elementary inequality $e^x\leq e+x^{\ell}e^x$, for all $x\geq 0$, $\ell\geq 0$, and we select $\ell=\frac{1}{2}$ here. Hence
\begin{align} \label{z2}
&\left(\sum_{k=0}^Ne^{\t k}|\hat u_N(k)|\right)^2 \left(\sum_{k=0}^N |\hat u_N(k)|\right) \notag\\
&\leq \left(\sum_{k=0}^N e|\hat u_N(k)|
+\sum_{k=0}^N\t^{\frac{1}{2}} k^{\frac{1}{2}}e^{\t k}|\hat u_N(k)|\right)^2
\left(\sum_{k=0}^N |\hat u_N(k)|\right) \notag\\
&\leq 2 e^2 \left(\sum_{k=0}^N |\hat u_N(k)|\right)^3
+2 \t \left(\sum_{k=0}^N k e^{\t k}|\hat u_N(k)|\right)
\left(\sum_{k=0}^N e^{\t k}|\hat u_N(k)|\right)\left(\sum_{k=0}^N |\hat u_N(k)|\right),
\end{align}
where we have used Young's inequality and H\"older's inequality.

Thus, combining (\ref{z1}) and (\ref{z2}) yields
\begin{align} \label{ge-1}
&\frac{d}{dt}\left(\sum_{k=0}^N e^{\t(t)k}|\hat u_N(t,k)|\right) \notag \\
&\leq \t'(t)\sum_{k=0}^N k e^{\t(t) k}|\hat u_N(t,k)|+2 e^2 \left(\sum_{k=0}^N |\hat u_N(t,k)|\right)^3 \notag\\
&\hspace{0.2 in}+2 \t(t) \left(\sum_{k=0}^N k e^{\t(t) k} |\hat u_N(t,k)|\right) \left(\sum_{k=0}^N e^{\t(t) k} |\hat u_N(t,k)|\right) \left(\sum_{k=0}^N |\hat u_N(t,k)|\right) \notag\\
&\leq \frac{1}{2}\t'(t)\sum_{k=0}^N k e^{\t(t) k}|\hat u_N(t,k)|+ 2 e^2 C_1^3 \notag\\
&\hspace{0.2 in}+\left(\frac{1}{2}\t'(t)+2 C_1 \t(t) \sum_{k=0}^N e^{\t(t) k} |\hat u_N(t,k)| \right)\left(\sum_{k=0}^N k e^{\t(t) k} |\hat u_N(t,k)|\right),
\end{align}
for all $t\in [0,T]$, where we have used (\ref{ge-0}).

Denote by $\t_N(t)$, $t\in [0,t_N]$, the unique solution of the ODE
\begin{align} \label{z-N}
\frac{1}{2}\t_N'(t)+2 C_1 \t_N(t)z_N(t)=0, \text{\;\;with\;\;} \t_N(0)=\sigma,
\end{align}
where we set
\begin{align} \label{def-z}
z_N(t):=\sum_{k=0}^N e^{\t_N(t)k}|\hat u_N(t,k)|.
\end{align}
Due to (\ref{z-N}) and (\ref{def-z}), we infer from (\ref{ge-1}) that
\begin{align} \label{ge-1'}
\frac{d z_N}{dt}(t) &\leq \frac{1}{2}\t_N'(t)\sum_{k=0}^N k e^{\t_N(t) k}|\hat u_N(t,k)|+2 e^2 C_1^3 \notag\\
&\leq -2 C_1 z_N(t)\t_N(t)\sum_{k=0}^N k e^{\t_N(t) k}|\hat u_N(t,k)|+2 e^2 C_1^3, \;\;t\in [0,t_N].
\end{align}
Next, we estimate $\t_N(t)\sum_{k=0}^N k e^{\t_N(t) k}|\hat u_N(t,k)|$ by considering the following two cases:

\emph{Case 1}: $N\geq \frac{1}{\t_N(t)}$. In this case, one has
\begin{align}   \label{ge-2}
&\t_N(t)\sum_{k=0}^N k e^{\t_N(t) k}|\hat u_N(t,k)| \geq \t_N(t)\sum_{\frac{1}{\t_N(t)} \leq k \leq N} k e^{\t_N(t) k}|\hat u_N(t,k)|\notag\\
&\geq \sum_{\frac{1}{\t_N(t)} \leq k \leq N} e^{\t_N(t) k}|\hat u_N(t,k)|
=\sum_{k=0}^N e^{\t_N(t) k}|\hat u_N(t,k)|-\sum_{0\leq k<\frac{1}{\t_N(t)}} e^{\t_N(t) k}|\hat u_N(t,k)| \notag\\
&\geq z_N(t)-e \sum_{0\leq k<\frac{1}{\t_N(t)}} |\hat u_N(t,k)|\geq z_N(t)-e C_1,
\end{align}
where the fact (\ref{ge-0}) has been used.

\emph{Case 2}: $N<\frac{1}{\t_N(t)}$. In this case, in order to obtain the same estimate as (\ref{ge-2}), we proceed as follows:
\begin{align*}
& \t_N(t)\sum_{k=0}^N k e^{\t_N(t) k}|\hat u_N(t,k)| \geq 0=z_N(t)-
\sum_{k=0}^N e^{\t_N(t) k}|\hat u_N(t,k)| \notag\\
& \geq z_N(t)-
e \sum_{k=0}^N |\hat u_N(t,k)|\geq z_N(t)-e C_1.
\end{align*}

We conclude from the above two cases that
$$\t_N(t)\sum_{k=0}^N k e^{\t_N(t) k}|\hat u_N(t,k)| \geq z_N(t)-e C_1,$$
and by substituting it into (\ref{ge-1'}), one has
\begin{align} \label{ge-3}
\frac{d z_N}{dt}(t)\leq -2 C_1 z_N^2(t)+2 e C_1^2 z_N(t)+2 e^2 C_1^3, \text{\;\;for all\;\;} t\in [0,t_N].
\end{align}
Notice that the right-hand side of (\ref{ge-3}) is negative when  $z_N > z^*=\frac{1+\sqrt{5}}{2}e C_1$, and hence (\ref{ge-3}) implies that
\begin{align} \label{bound-z}
z_N(t)\leq \max\{z_N(0),z^*\}=\max\left\{\sum_{k=0}^N e^{\sigma k}|\hat u_N(0,k)|,\frac{1+\sqrt{5}}{2}e C_1\right\}\leq C_0,
\end{align}
for all $t\in [0,t_N]$, where we have also used (\ref{C2}) and (\ref{C0}) in the above estimate. Therefore, by virtue of the uniform bound (\ref{bound-z}) of $z_N(t)$, the solution $\t_N(t)$ of the initial value problem (\ref{z-N}) on $[0,t_N]$ can be extended to the solution on $[0,T]$, and thus (\ref{bound-z}) holds for all $t\in [0,T]$, i.e.,
\begin{align} \label{bound-z'}
z_N(t)\leq C_0, \text{\;\;for all\;\;} t\in [0,T],
\end{align}
and along with (\ref{z-N}), we infer
\begin{align} \label{ge-4}
\t_N(t)=\sigma \exp\left(-4 C_1 \int_0^t z_N(s) ds\right)\geq \sigma e^{-4C_0 C_1 t}, \text{\;\;for all\;\;} t\in [0,T].
\end{align}
Let us define
\begin{align} \label{def-tau}
\t(t)=\sigma e^{-\l |t|}, \text{\;\;with\;\;} \l=4 C_0 C_1,
\end{align}
where $C_0$ and $C_1$ are specified in (\ref{C0}) and (\ref{C1}), respectively.
Then, (\ref{ge-4}) and (\ref{def-tau}) show that $\t(t)\leq \t_N(t)$ on $[0,T]$, and consequently,
\begin{align} \label{ge-5}
\norm{u_N(t)}_{G_{\t(t)}(W)}=\sum_{k=0}^N e^{\t(t)k}|\hat u_N(t,k)|
\leq \sum_{k=0}^N e^{\t_N(t)k}|\hat u_N(t,k)|=z_N(t)\leq C_0,
\end{align}
for all $t\in [0,T]$, due to (\ref{bound-z'}). Since $N$ is an arbitrary integer larger than $N'$, we conclude, for every fixed number $N_0$,
for every $t\in [0,T]$,
$$\sum_{k=0}^{N_0} e^{\t(t)k}|\hat u(t,k)|=\lim _{N\rightarrow \infty }\sum_{k=0}^{N_0} e^{\t(t)k}|\hat u_N(t,k)|\le C_0\ .$$
Therefore, since $N_0\ge 0$ and $T>0$ are arbitrarily selected, $\norm{u(t)}_{G_{\t(t)}(W)}\leq C_0$ for all $t\geq 0$.
\end{proof}

\smallskip
\begin{remark}\label {improvement} In Theorem \ref{main-thm}, we found a lower bound $\t(t)$
of the radius of spatial analyticity of $u(t)$, where $\t(t)=\sigma e^{-\l |t|}$, with $\l=4C_0 C_1$. By the definition of $C_0$ in (\ref{C0}), one has
\begin{align} \label{def-lamda}
\l=
\begin{cases}
2(1+\sqrt{5})e C_1^2, \text{\;\;if\;\;} \norm{u_0}_{G_{\sigma}(W)}\leq (1+\sqrt 5)e C_1/2;\\
4 C_1 \norm{u_0}_{G_{\sigma}(W)}, \text{\;\;if\;\;} \norm{u_0}_{G_{\sigma}(W)}> (1+\sqrt 5)e C_1/2.
\end{cases}
\end{align}
Here, we shall provide a slightly different lower bound $\tilde \t(t)$ of the radius of analyticity of $u(t)$. More precisely, we can choose $\tilde \t(t)=\sigma e^{-\tilde \l(t) |t|}$, where $\tilde \l(t)$ defined in (\ref{re-4}) below, is almost \emph{independent} of the Gevrey norm $\norm{u_0}_{G_{\sigma}(W)}$ of the initial datum, for large values of $|t|$. Indeed, by (\ref{ge-3}), it is easy to see that
\begin{align} \label{re-1}
\frac{dz_N}{dt}(t)\leq -2 C_1 \left(z_N(t)-\frac{eC_1}{2}\right)^2+\frac{5}{2}e^2C_1^3.
\end{align}
After some manipulations of (\ref{re-1}), we obtain
\begin{align} \label{re-2}
\int_0^t \left(z_N(s)-\frac{eC_1}{2}\right)^2 ds \leq \frac{z_N(0)}{2C_1}+\frac{5e^2 C_1^2 t}{4} \leq \frac{\norm{u_0}_{G_{\s}(W)}}{2C_1}+\frac{5e^2 C_1^2 t}{4}.
\end{align}
Note
\begin{align} \label{re-3}
\int_0^t z_N(s) ds&=\int_0^t \left(z_N(s)-\frac{e C_1}{2}\right) ds+\frac{e C_1}{2}t \notag\\
&\leq \left[\int_0^t \left(z_N(s)-\frac{e C_1}{2}\right)^2 ds\right]^{\frac{1}{2}}\sqrt{t}+\frac{e C_1}{2}t \notag\\
&\leq \left[\frac{\norm{u_0}_{G_{\s}(W)}}{2C_1}+\frac{5e^2 C_1^2 t}{4}\right]^{\frac{1}{2}}\sqrt{t}+\frac{e C_1}{2}t,
\end{align}
where we have used the estimate (\ref{re-2}). Thus, by (\ref{ge-4}) and (\ref{re-3}), we may select
\begin{align} \label{re-4}
\tilde \tau(t)=\s e^{-\tilde \l(t)|t|}, \text{\;\;with\;\;} \tilde \l(t)=2C_1\left[\frac{2\norm{u_0}_{G_{\s}(W)}}{C_1|t|}+5e^2 C_1^2\right]^{\frac{1}{2}}+2e C_1^2, \;\;|t|>0,
\end{align}
and then $\tilde \tau(t)\leq \tau_N(t)$. Thus, by adopting the argument in Theorem \ref{main-thm}, it can be shown that $\norm{u(t)}_{G_{\tilde \t(t)}(W)}\leq C_0$ for all $t\in \reals$. Also, we see from (\ref{re-4}) that $\tilde \l(t)\rightarrow 2(1+\sqrt 5)e C_1^2$ as $|t|\rightarrow \infty$, that is, $\tilde \l(t)$ is almost \emph{independent} of $\norm{u_0}_{G_{\sigma}(W)}$, for large values of $|t|$, in contrast to the definition (\ref{def-lamda}) of $\l$.

\end{remark}

\smallskip

\begin{remark} For analytic initial data, the  Gevrey norm estimate $\norm{u(t)}_{G_{\t(t)}(W)}\leq C_0$, where $\t(t)=\sigma e^{-\l |t|}$, can provide a growth estimate of the $H^s$ norm of the solution $u(t)$.
Indeed,
\begin{align*}
\norm{u}_{H^s}^2=\sum_{k\geq 0} (k^{2s}+1)|u_k|^2
\leq \sup |u_k| \left(\sum_{k\geq 0} |u_k| e^{\t k} \frac{k^{2s}}{e^{\t k}}+\sum_{k\geq 0}|u_k|\right).
\end{align*}
Since the maximum of the function $k \mapsto \frac{k^{2s}}{e^{\t k}}$ occurs at $k=\frac{2s}{\t}$, we obtain
\begin{align*}
\norm{u}_{H^s}^2\leq \norm{u}_{W}\left[ e^{-2s} \left(\frac{2s}{\t}\right)^{2s}
\norm{u}_{G_{\t}(W)}+\norm{u}_{W}\right].
\end{align*}
It follows that
 \[
\norm{u(t)}_{H^s}^2\leq C(s)  e^{2\l s t},
\]
that is to say, the $H^s$ norm grows at most exponentially, if $s>\frac{1}{2}$, which agrees with the $H^s$ norm estimates in Corollary 2, section 3 of \cite{Gerard-10}.
\end{remark}

\smallskip

\begin{remark} \label{rmk-compare}
Let us set $A=\sqrt{I-\Delta}$. Recall the regular Gevrey classes of analytic functions are defined by $\mathcal D(A^s e^{\s A})$ furnished the norm $\norm{A^s e^{\s A}\cdot}_{L^2(\mathbb T)}$, where $s\geq 0$, $\s>0$. It has been mentioned in the Introduction that we choose to employ the special Gevrey class $G_{\s}(W)$ in this manuscript, since it provides better estimate of the lower bound the radius of analyticity of the solution. In particular, we can do the following comparisons.

Suppose the initial condition $u_0\in \mathcal D(A^s e^{\sigma A})$, $s>\frac{1}{2}$, $\s>0$, and let us perform the estimates by using the regular Gevrey classes $\mathcal D(A^s e^{\sigma A})$. Adopting similar arguments as in \cite{Guo-Titi-12, Larios-Titi-10}, one can manage to show that
\begin{align*}
\norm{A^s e^{\t_1(t) A}u(t)}_{L^2}^2\leq  \norm{A^s e^{\sigma A}u_0}_{L^2}^2+C\int_0^{|t|} \norm{u(t')}_{H^s}^4 dt',
\;\;s>\frac{1}{2},
\end{align*}
if $\t_1(t)=\sigma e^{-\int_0^{|t|} h(t') dt'}$,
where $h(t)=C\left(\norm{A^p e^{\sigma A}u_0}_{L^2}^2+\int_0^{|t|} \norm{u(t')}_{H^s}^4 dt'\right)$. Since $\norm{u(t)}_{H^s}$, $s>\frac{1}{2}$, has an upper bound that grows exponentially as $|t|\rightarrow \infty$ (see \cite{Gerard-10}), we infer that $\t_1(t)$ might shrinks \emph{double} exponentially, compared to the exponential shrinking rate of $\t(t)$ established in Theorem \ref{main-thm}, where the Gevrey class $G_{\s}(W)$ is used. Such advantage of employing the special Gevrey class $G_{\s}(W)$ stems from the uniform boundedness of the norm $\norm{u(t)}_{W}$ for the solution $u$ to the cubic Szeg\H{o} equation for sufficiently regular initial data.
\end{remark}

\section{Appendix}
For the sake of completion, we provide a straightforward proof of the following property of the Hankel operator.
\begin{proposition}
For any $u\in L^2_+(\mathbb T)\cap W$, the following double inequality holds
\begin{align} \label{tracebound1}
\frac 12\norm{u}_{W}\leq  Tr(|H_u|)\le \sum _{k=0}^\infty \left (\sum _{\ell =0}^\infty \vert \hat u(k+\ell )\vert ^2\right )^{\frac12}\ .
\end{align}
\end{proposition}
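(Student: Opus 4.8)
The plan is to prove the double inequality in (\ref{tracebound1}) by working directly with the matrix of the Hankel operator $H_u$ in the Fourier basis. Recall $H_u(h)=\Pi(u\bar h)$, so for the basis vectors $e_\ell:=e^{i\ell\theta}$, $\ell\ge 0$, one computes $H_u(e_\ell)=\Pi(u\, e^{-i\ell\theta})=\sum_{k\ge 0}\hat u(k+\ell)e^{ik\theta}$. Hence $H_u$ is represented by the (infinite) Hankel matrix $\Gamma=(\hat u(k+\ell))_{k,\ell\ge 0}$, which is symmetric (not Hermitian) in this basis. The trace norm $Tr(|H_u|)$ is the sum of singular values of $\Gamma$, i.e. the Schatten $\mathcal S^1$ norm.

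For the \emph{upper bound}, I would decompose $\Gamma$ as a sum of rank-one-type pieces indexed by the diagonals. Write $\hat u(k+\ell)$: the entry depends only on $n=k+\ell$, and the $n$-th anti-diagonal contributes a matrix $\Gamma_n$ with $\hat u(n)$ on positions $(k,\ell)$ with $k+\ell=n$ and zeros elsewhere. Actually a cleaner route: for each fixed $\ell_0$, the $\ell_0$-th column of $\Gamma$ is the vector $(\hat u(k+\ell_0))_{k\ge 0}$, with $\ell^2$ norm $\big(\sum_{k\ge 0}|\hat u(k+\ell_0)|^2\big)^{1/2}$. But summing column norms overcounts; instead I would use that $\Gamma$ equals $\sum_{\ell_0\ge 0} \Gamma^{(\ell_0)}$ where $\Gamma^{(\ell_0)}$ keeps only the single anti-diagonal-block structure... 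Let me instead use the standard trick: split $\Gamma = \sum_{j\ge 0} R_j$ where $R_j$ is the operator whose action isolates the shift by $j$, but the sharpest elementary bound comes from grouping rows: the tail sum $\sum_{\ell\ge 0}|\hat u(k+\ell)|^2$ is exactly the squared $\ell^2$-norm of the $k$-th row of $\Gamma$. I would bound $\|\Gamma\|_{\mathcal S^1}$ by writing $\Gamma$ as a sum over $k$ of operators $\Gamma_k$ (supported on row $k$, but made symmetric by also putting the transposed row), controlling $\|\Gamma_k\|_{\mathcal S^1}$ by the rank (at most $2$) times the operator norm, which is the row's $\ell^2$ norm; subadditivity of $\mathcal S^1$ then gives $Tr(|H_u|)\le \sum_{k\ge 0}(\sum_{\ell\ge 0}|\hat u(k+\ell)|^2)^{1/2}$, after absorbing the factor properly by a more careful (overlap-free) decomposition into symmetric blocks. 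This decomposition step — getting the \emph{constant right}, i.e. avoiding a spurious factor of $2$ — is where I expect the main difficulty; the natural fix is to split $\Gamma$ into its ``upper'' and ``lower'' halves along the anti-diagonal and symmetrize each, or to use the dyadic/telescoping decomposition of Hankel matrices into rank-one pieces along anti-diagonals so that the $\mathcal S^1$ norms add without loss.

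For the \emph{lower bound} $\tfrac12\|u\|_W\le Tr(|H_u|)$, the idea is to test against a well-chosen trace-class operator of norm $\le 1$, or equivalently to use the duality $\|\Gamma\|_{\mathcal S^1}=\sup\{|Tr(\Gamma B)|: \|B\|_{\mathrm{op}}\le 1\}$. Given a sequence of unimodular signs $\epsilon_k$ with $\epsilon_k \hat u(k)\ge 0$ (so that $\sum_k \epsilon_k\hat u(k)=\sum_k|\hat u(k)|=\|u\|_W$), I would pick $B$ to be, say, the diagonal/shift combination that extracts $\sum_n \hat u(n)$ from $\Gamma$ up to the factor $2$: note $Tr(\Gamma \cdot S^j)$ picks out the $j$-th anti-diagonal sum, and a suitable average of shifts $S^j$ with $\|\cdot\|_{\mathrm{op}}\le 1$ recovers $\tfrac12\sum_n|\hat u(n)|$ (the $\tfrac12$ arising because the entry $\hat u(n)$ appears on the anti-diagonal $k+\ell=n$ in $n+1$ positions, and one must normalize the test operator to have operator norm one). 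Concretely, $B$ can be taken as (a sign-twisted version of) the orthogonal projection onto a half-line or a Toeplitz operator with symbol $e^{i\theta}$-type; the cleanest is to use that evaluating the Hankel matrix against the rank-one operators $e_0\otimes e_n$ gives $\hat u(n)$ and then average. I would then verify $\|B\|_{\mathrm{op}}\le 1$ by a direct estimate. I expect this direction to be comparatively routine once the correct test operator is identified; the only subtlety is again bookkeeping the factor $\tfrac12$, which is genuine and forced by the structure of Hankel matrices.

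Finally I would assemble the two bounds into the displayed chain (\ref{tracebound1}), remarking that combined with Parseval this also recovers the qualitative equivalence $Tr(|H_u|)\asymp \|u\|_{B^1_{1,1}}$ used in Proposition \ref{prop}, though only the explicit elementary bounds above are needed here. The main obstacle, to reiterate, is the decomposition of the Hankel matrix for the upper bound in a way that keeps the $\mathcal S^1$ norms additive with no lost constant; all other steps are standard operator-theoretic manipulations.
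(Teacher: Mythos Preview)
Your plan is roughly correct, but it is not a proof, and on both halves you leave precisely the decisive step unspecified.

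\textbf{Lower bound.} Your duality idea is the same as the paper's, only stated less concretely. The paper uses the standard inequality $\sum_k |(Ae_k,f_k)|\le Tr(|A|)$ for any pair of orthonormal systems, and then makes two explicit choices: $f_k=e^{ik\theta}$ gives $\sum_{k\ge 0}|\hat u(2k)|\le Tr(|H_u|)$, while $f_k=e^{i(k+1)\theta}$ gives $\sum_{k\ge 0}|\hat u(2k+1)|\le Tr(|H_u|)$; adding the two yields $\|u\|_W\le 2\,Tr(|H_u|)$. This is exactly your ``diagonal/shift combination'' made precise --- in duality language, the two test operators are the unitary diagonal matrices $B_1=\sum_k \overline{\epsilon_{2k}}\,e_k\otimes e_k$ and $B_2=\sum_k \overline{\epsilon_{2k+1}}\,e_{k+1}\otimes e_k$. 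Your proposal never actually names a test operator, so as written it is incomplete.

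\textbf{Upper bound.} Here your route is genuinely different from the paper's, and in fact simpler than you think. The paper does \emph{not} decompose the matrix: it diagonalises $|H_u|$, observes that $\{H_u(\e_j)/\rho_j\}$ is again orthonormal (using the antilinear symmetry $(H_u h_1,h_2)=(H_u h_2,h_1)$), builds from this an antilinear contraction $\Omega_u$, writes $Tr(|H_u|)=\sum_{k,\ell}\overline{\hat u(k+\ell)}(\Omega_u e^{ik\theta},e^{i\ell\theta})$, and applies Cauchy--Schwarz in $\ell$ together with $\|\Omega_u e^{ik\theta}\|\le 1$. Your matrix decomposition works too, but the ``main difficulty'' you flag --- the spurious factor of $2$ --- is self-inflicted. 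Do \emph{not} symmetrise. Simply write $\Gamma=\sum_{k\ge 0}R_k$ where $R_k$ retains only the $k$-th row of $\Gamma$ (so $R_k$ is the rank-one operator $e_k\otimes r_k$ with $r_k=(\hat u(k+\ell))_{\ell\ge 0}$). Then each entry of $\Gamma$ appears in exactly one $R_k$, $\|R_k\|_{\mathcal S^1}=\|r_k\|_{\ell^2}=\big(\sum_{\ell\ge 0}|\hat u(k+\ell)|^2\big)^{1/2}$, and subadditivity of the trace norm gives the upper bound in (\ref{tracebound1}) with constant $1$. The rank-$2$ symmetrised pieces you propose double-count off-diagonal entries and create the very problem you then worry about.
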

\begin{proof}
Recall the following result in the operator theory (see, e.g.,  \cite{Conway-2000}).
Let $A$ be an operator on a Hilbert space $H$, where $A$ belongs to the trace class. If $\{e_k\}$ and $\{f_k\}$ are two orthonormal families in $H$, then
\begin{align} \label{operator}
\sum_{k}|(A e_k,f_k)|\leq Tr(|A|).
\end{align}
In order to find a lower bound of $Tr(|H_u|)$, we use the estimate (\ref{operator}) by computing
$\sum_{k}|( H_u (e^{ik\theta}),f_k)|$ with two different orthonormal systems $\{f_k\}$ selected below.
Notice that, by the definition (\ref{hankel}) of the Hankel operator $H_u: L^2_+(\mathbb T)\rightarrow L^2_+(\mathbb T)$, we have
\begin{align} \label{appen-0}
H_u (e^{ik\theta})=\Pi(u e^{-ik\theta})=\Pi\left(\sum_{j\geq 0}\hat u(j) e^{i(j-k)\theta}\right)
=\sum_{j\geq 0}\hat u(j+k)e^{ij\theta}.
\end{align}
If we choose $f_k=e^{ik\theta}$, $k\geq 0$, and use (\ref{appen-0}), then it follows that
\begin{align*}
Tr (\vert H_u\vert )\ge \sum_{k\geq 0}|(H_u (e^{ik\theta}),e^{ik\theta})|
=\sum_{k\geq 0}\left|\big(\sum_{j\geq 0}\hat u(j+k) e^{ij\theta},e^{ik\theta}\big)\right|=\sum_{k\geq 0}|\hat u(2k)|.
\end{align*}
However, if we select $f_k=e^{i(k+1)\theta}$, for every integer $k\geq 0$, then
\begin{align*}
Tr (\vert H_u\vert )\ge \sum_{k\geq 0}|(H_u (e^{ik\theta}),f_k)|
=\sum_{k\geq 0 } |\hat u(2k+1)|.
\end{align*}
Summing up, we have proved
\begin{align*}
2 Tr(|H_u|)\geq \sum_{k\geq 0}|\hat u(k)|=\norm{u}_{W}.
\end{align*}

We now pass to the second inequality. Recall from (\ref{hankel}) that, for every $h_1,h_2\in L^2_+$,
$$(H_u(h_1), h_2)=(u, h_1h_2)=(H_u(h_2), h_1)\ ,$$
which implies that $H_u^2$ is a positive self-adjoint linear operator. Moreover,
$$Tr(H_u^2)=\sum _{k,\ell \ge 0}\vert \hat u(k+\ell )\vert ^2=\sum _{n=0}^\infty (n+1)\vert \hat u(n)\vert ^2<\infty $$
as soon as $u\in H^{1/2}$.  In other words, $\vert H_u\vert =\sqrt {H_u^2}$ is a positive Hilbert--Schmidt operator if $u\in L^2_+\cap H^{1/2}.$
Let $\{ \rho _j\}$ be the sequence of positive eigenvalues of $\vert H_u\vert $,
and let $\{ \e _j\}$ be an orthonormal sequence of corresponding eigenvectors. Notice that
$$(H_u(\e _j), H_u(\e _{j'}))=(H_u^2(\e _{j'}), \e _j)=\rho _{j'}^2\delta _{jj'}\ .$$
We infer that the sequence $\{ H_u(\e _j)/\rho _j\} $ is orthonormal. We then define the following antilinear operator on $L^2_+$,
$$\Omega _u(h)=\sum _j \frac{(H_u(\e _j), h)}{\rho _j}\e _j\ .$$
Notice that, due to the orthonormality of both systems $\{ \e _j\} $ and $\{ H_u(\e _j)/\rho _j\}$,
$$\norm {\Omega _u(h)} \le \norm{h}\ .$$
We now observe that
\begin{eqnarray*}
\rho _j&=&(\Omega _u(H_u(\e _j)), \e _j)=\sum _{k=0}^\infty (\Omega _u(e^{ik\theta}) , \e _j) (e^{ik\theta },  H_u(\e _j))=\sum _{k=0}^\infty (\Omega _u(e^{ik\theta}), \e _j)(\e _j, H_u(e^{ik\theta }))\\
&=&\sum _{k,\ell \ge 0}\overline {\hat u(k+\ell )}(\Omega _u(e^{ik\theta }), \e _j)(\e _j, e^{i\ell \theta})\ ,
\end{eqnarray*}
and therefore, for every $N$,
$$Tr(\vert H_u\vert )=\sum _j \rho _j=\sum _{k,\ell \ge 0}\overline {\hat u(k+\ell )}(\Omega _u(e^{ik\theta}), e^{i\ell \theta})\ .$$
Apply the Cauchy--Schwarz inequality to the sum on $\ell $,
$$Tr(\vert H_u\vert )\le \sum _{k=0}^\infty \Vert \Omega _u(e^{ik\theta})\Vert \left (\sum _{\ell =0}^\infty \vert \hat u(k+\ell )\vert ^2\right )^{\frac12}\ ,$$
and the claim follows from $\norm{\Omega _u(e^{ik\theta })}\le \norm{e^{i\theta}}= 1$.
\end{proof}
Using the above proposition, it is easy to derive the estimate (\ref{tracebound}) used in the proof of Proposition \ref{prop}. Indeed, by the Cauchy--Schwarz inequality in the $k$ sum, we have, for every $s>1$,
\begin{eqnarray*}
\sum _{k=0}^\infty \left (\sum _{\ell =0}^\infty \vert \hat u(k+\ell )\vert ^2\right )^{\frac12}&\le & \left (\sum _{k=0}^\infty (1+k)^{1-2s}\right )^{\frac 12}
 \left (\sum _{k,\ell \ge 0}(1+k)^{2s-1}\vert \hat u(k+\ell )\vert ^2\right )^{\frac 12} \\
&\le &\left (\frac{s}{s-1}\right )^{\frac 12} \left (\sum _{k,\ell \ge 0}(1+k+\ell )^{2s-1}\vert \hat u(k+\ell )\vert ^2\right )^{\frac 12}\\
&\le & C_s\norm{u}_{H^s}\ .
\end{eqnarray*}

\par\smallskip\noindent
\textbf{Acknowledgement\,:}    This work was  supported in part by the Minerva Stiftung/Foundation, and  by the NSF
grants DMS-1009950, DMS-1109640 and DMS-1109645.

\end{document}